\newtheorem{theorem}{Theorem}
\newtheorem{lemma}[theorem]{Lemma}
\newtheorem{proposition}[theorem]{Proposition}
\newtheorem{remark}[theorem]{Remark}
\newcounter{hypo}
\def\C{{\mathbb C}}
\def\N{{\mathbb N}} 
\def\R{{\mathbb R}}
\def\one{{\mathchoice {\rm 1\mskip-4mu l} {\rm 1\mskip-4mu l} {\rm 1\mskip-4.5mu l} {\rm 1\mskip-5mu l}}}
\def\re{\mathop{\rm Re}\nolimits}
 \def\im{\mathop{\rm Im}\nolimits}
\def\dive{\mathop{\rm div}\nolimits}
\def\ad{\mathop{\rm ad}\nolimits}
\def\<{\langle}
\def\>{\rangle}
\author[J.-F. Bony]{Jean-Fran\c{c}ois Bony}
\author[D. H\"{a}fner]{Dietrich H\"{a}fner}
\email{bony@math.u-bordeaux1.fr}
\email{hafner@math.u-bordeaux1.fr} 
\address{\newline Institut de Math\'ematiques de Bordeaux   \newline UMR 5251 du CNRS   \newline Universit\'e de Bordeaux I  \newline 351 cours de la Lib\'eration   \newline 33 405 Talence cedex    \newline France}
\title[Low frequency resolvent estimates]{Low frequency resolvent estimates for long range perturbations of the euclidean Laplacian}
\keywords{Resolvent estimates, asymptotically Euclidean manifolds}
\subjclass[2000]{35P25, 47A10}
\begin{document}

\begin{abstract}
Let $P$ be a long range metric perturbation of the Euclidean Laplacian on $\R^d,\, d\ge 3$. We prove that the following resolvent estimate holds:
\begin{equation*}
\Vert\<x\>^{-\alpha}(P-z)^{-1}\<x\>^{-\beta}\Vert\lesssim 1\quad \forall z\in \C\setminus \R,\,\vert z\vert<1,
\end{equation*}
if $\alpha,\beta> 1/2$ and $\alpha+\beta> 2$. The above estimate is false for the Euclidean Laplacian in dimension $3$ if $\alpha\le 1/2$ or $\beta\le 1/2$ or $\alpha+\beta<2$.
\end{abstract}

\maketitle

\section{Introduction}

There are now many results dealing with the low frequency behavior of the resolvent of Schr\"{o}dinger type operators. The methods used to obtain these results are various: one can apply the Fredholm theory to study perturbations by a potential (see {\it e.g.} \cite{JeKa}) or a short range metric (see {\it e.g.} \cite{Wa}). The resonance theory is also useful to treat compactly supported perturbations of the flat case (see  {\it e.g.} \cite{Bu}). Using the general Mourre theory, one can obtain limiting absorption principles at the thresholds (see {\it e.g.} \cite{FoSk} or \cite{Ri}). The pseudo-differential calculus of Melrose allows to describe the kernel of the resolvent at low energies for compactifiable manifolds (see  {\it e.g.} \cite{GH1}). Concerning the long range case, Bouclet \cite{Bo} has obtained a uniform control of the resolvent for perturbations in divergence form. We refer to his article and to \cite{DeSk} for a quite exhaustive list of previous results for perturbations of the Euclidean Laplacian.

On $\R^d$ with $d \geq 3$, we consider the following operator 
\begin{equation} \label{a5}
P= - b \dive ( G \nabla b ) = - \sum_{i,j=1}^{d} b(x) \frac{\partial \ }{\partial x_{i}} G_{i,j} (x) \frac{\partial \ }{\partial x_{j}} b (x) ,
\end{equation}
where $b(x)\in C^{\infty}(\R^d)$ and $G(x)\in C^{\infty}(\R^d;\R^{d\times d})$ is a real symmetric $d\times d$ matrix. The $C^{\infty}$ hypothesis is made mostly for convenience, much weaker regularity could actually be considered. We make an ellipticity assumption:
\begin{equation} \tag{H1} \label{a3}
\exists C>0, \ \forall x\in \R^d \qquad G(x)\ge C I_d \ \text{ and } \  b(x)\ge C, 
\end{equation}
$I_d$ being the identity matrix. We also assume that $P$ is a long range perturbation of the Euclidean Laplacian:
\begin{equation} \tag{H2} \label{a4}
\exists \rho > 0 , \ \forall \alpha \in \N^d \qquad \vert \partial^{\alpha}_x(G(x)-I_d) \vert + \vert \partial^{\alpha}_x ( b(x) - 1 ) \vert\lesssim \<x\>^{-\rho-\vert\alpha\vert}. \\
\end{equation}

In particular, if $b=1$, we are concerned with an elliptic operator in divergence form $P = - \dive (G \nabla )$. On the other hand, if $G=(g^2 g^{i,j}(x))_{i,j},\, b=(\det g^{i,j})^{1/4},\, g=\frac{1}{b}$, then the above operator is unitarily equivalent to the Laplace--Beltrami $- \Delta_{\mathfrak{g}}$ on $(\R^d, \mathfrak{g})$ with metric
\[\mathfrak{g} = \sum_{i,j=1}^{d} g_{i,j} (x) \, d x^i \, d x^j ,\]
where $(g_{i,j})_{i,j}$ is inverse to $(g^{i,j})_{i,j}$ and the unitary transform is just multiplication by $g$.  

%The following estimate therefore holds in particular for the Laplacian on an asymptotically euclidean manifold~:

\begin{theorem}\sl \label{th1}
Let $P$ be of the form \eqref{a5} in $\R^{d}$ with $d \geq 3$. Assume \eqref{a3} and \eqref{a4}.

$i)$ For all $\varepsilon>0$, we have
\begin{equation} \label{b1}
\big\Vert\<x\>^{-1/2-\varepsilon}(\sqrt{P}-z)^{-1}\<x\>^{-1/2-\varepsilon} \big\Vert \lesssim 1 ,
\end{equation}
uniformly in $z\in \C\setminus \R$, $\vert z\vert<1$.

$ii)$ For all $\varepsilon>0$, we have
\begin{equation} \label{b2}
\big\Vert \<x\>^{-1/2-\varepsilon}(P-z)^{-1}\<x\>^{-1/2-\varepsilon} \big\Vert \lesssim \vert z\vert^{-1/2} ,
\end{equation}
uniformly in $z\in \C\setminus \R$, $\vert z\vert<1$.

$iii)$ For all $\alpha,\beta> 1/2$ with $\alpha+\beta> 2$, we have
\begin{equation} \label{b3}
\big\Vert \<x\>^{-\alpha}(P-z)^{-1}\<x\>^{-\beta} \big\Vert \lesssim 1 ,
\end{equation}
uniformly in $z\in \C\setminus \R$, $\vert z\vert<1$.
\end{theorem}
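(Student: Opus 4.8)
The plan is to deduce part $iii)$ from parts $i)$ and $ii)$ by an interpolation-plus-commutator argument, exploiting the factorization $P - z^2 = (\sqrt{P}-z)(\sqrt{P}+z)$. First I would reduce to the regime $|z|$ small (the region $|z|$ bounded away from $0$, with $z$ off the real axis, being standard from limiting absorption for long range perturbations away from thresholds). The key algebraic identity is
\begin{equation*}
(P - z^2)^{-1} = (\sqrt{P}+z)^{-1}(\sqrt{P}-z)^{-1},
\end{equation*}
valid for $z \in \C \setminus \R$ (so that $\pm z$ avoid $\sqrt{P} \ge 0$), after replacing the spectral parameter $z$ in \eqref{b3} by $z^2$ — legitimate since $z \mapsto z^2$ maps $\{|z|<1\} \setminus \R$ into $\{|z|<1\}$ minus the positive reals, and one checks the missing set is covered by analyticity/continuity up to the cut. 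Applying \eqref{b1} to each factor would give $\Vert \<x\>^{-1/2-\varepsilon}(P-z^2)^{-1}\<x\>^{-1/2-\varepsilon}\Vert \lesssim 1$ \emph{only} if one could freely insert an intermediate weight $\<x\>^{1/2+\varepsilon}\<x\>^{-1/2-\varepsilon}$ between the two resolvents, which one cannot. Instead I would estimate $\<x\>^{-\alpha}(P-z^2)^{-1}\<x\>^{-\beta}$ by writing it as $\big(\<x\>^{-\alpha}(\sqrt{P}+z)^{-1}\<x\>^{-1/2-\varepsilon}\big)\big(\<x\>^{1/2+\varepsilon}(\sqrt{P}-z)^{-1}\<x\>^{-\beta}\big)$; the first factor is bounded by \eqref{b1} provided $\alpha \ge 1/2+\varepsilon$, but the second factor has a \emph{growing} weight on the left and is not directly controlled.

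To handle this I would use a commutator/elliptic-regularity trick: since $(\sqrt P + z)^{-1}$ gains decay while $(\sqrt P - z)^{-1}$ does not near $z=0$, the singular behavior $|z|^{-1/2}$ in \eqref{b2} is entirely carried by the resolvent of $P$ itself at energy $\sim 0$, i.e.\ by the near-zero part of the spectrum. The cleanest route is therefore a real-interpolation (complex interpolation on the weights) argument applied directly to $(P-z)^{-1}$: estimate \eqref{b2} gives the bound $|z|^{-1/2}$ for the symmetric weight exponent $1/2+\varepsilon$ on each side, while a second input — the bound at \emph{higher} decay — should give $O(1)$. Concretely, I expect an estimate of the shape
\begin{equation*}
\big\Vert \<x\>^{-1/2-\varepsilon}(P-z)^{-1}\<x\>^{-3/2-\varepsilon}\big\Vert \lesssim 1, \qquad \big\Vert \<x\>^{-3/2-\varepsilon}(P-z)^{-1}\<x\>^{-1/2-\varepsilon}\big\Vert \lesssim 1,
\end{equation*}
which one derives from \eqref{b1} and the factorization above by absorbing one extra power of $\<x\>^{-1}$ against $(\sqrt P + z)^{-1}$ — this resolvent, being bounded on $L^2$ uniformly and mapping $\<x\>^{-1}L^2$ into a slightly better space by the long-range structure of $P$, kills the growth. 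Then interpolating between the $|z|^{-1/2}$ bound at weights $(1/2+\varepsilon, 1/2+\varepsilon)$ and the $O(1)$ bound at weights $(1/2+\varepsilon, 3/2+\varepsilon)$ and $(3/2+\varepsilon, 1/2+\varepsilon)$ yields $O(1)$ precisely on the region $\alpha, \beta > 1/2$, $\alpha + \beta \ge 2 + \varepsilon'$, and letting $\varepsilon \to 0$ covers $\alpha+\beta>2$. One also needs the trivial a priori bound $\Vert \<x\>^{-N}(P-z)^{-1}\<x\>^{-N}\Vert \lesssim |z|^{-1}$ for large $N$ as an endpoint to make the interpolation scheme rigorous.

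The main obstacle I anticipate is making the "absorb an extra $\<x\>^{-1}$ against $(\sqrt P + z)^{-1}$" step precise and uniform in $z$: one must show that $(\sqrt P + z)^{-1}$, composed with a weight, genuinely improves decay by one power, which requires commuting $\<x\>$ through $(\sqrt P + z)^{-1}$ and controlling the commutator $[\sqrt P, \<x\>]$ — a nonlocal operator because $\sqrt P$ is. I would circumvent the square root by working instead with $(P+1)$-type resolvents or with the resolvent $(P - z^2)^{-1}$ directly and commuting $\<x\>$ through $(P-z^2)^{-1}$, where $[P,\<x\>^s]$ is a first-order operator with coefficients $O(\<x\>^{s-1})$ plus long-range corrections; the resulting commutator terms are then controlled by \eqref{b1}–\eqref{b2} applied with shifted weights, closing an induction on the total weight $\alpha+\beta$. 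A secondary technical point is justifying the complex interpolation on operator-valued holomorphic families $z \mapsto \<x\>^{-s}(P-z)^{-1}\<x\>^{-t}$, for which Stein interpolation along the strip $\{0 \le \re s \le 1\}$ with the uniform-in-$z$ bounds on the two boundary lines suffices.
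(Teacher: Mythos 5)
There is a genuine gap, and it sits exactly where you flagged your ``main obstacle''. The asymmetric endpoint bound $\Vert \langle x\rangle^{-1/2-\varepsilon}(P-z)^{-1}\langle x\rangle^{-3/2-\varepsilon}\Vert \lesssim 1$ that your interpolation scheme needs is itself an instance of \eqref{b3} (with $\alpha=1/2+\varepsilon$, $\beta=3/2+\varepsilon$), so the proposal reduces the theorem to a special case of itself, and the mechanism you offer for that special case does not hold up. First, $(\sqrt{P}+z)^{-1}$ is not uniformly bounded on $L^{2}$: by the spectral theorem its norm is $\dist(-z,[0,+\infty))^{-1}$, which blows up as $z\to 0$ off the real axis. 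Second, and more seriously, the claim that $(\sqrt{P}+z)^{-1}$ ``absorbs'' a power of $\langle x\rangle^{-1}$, or that commuting $\langle x\rangle^{s}$ through $(P-z)^{-1}$ closes an induction, has no identified mechanism: $[P,\langle x\rangle]$ is a first-order operator with non-decaying coefficients, and controlling $\nabla(P-z)^{-1}$ near $z=0$ reintroduces the very singularity you are trying to remove. A decisive sanity check is the dimension: your deduction of $iii)$ from $i)$--$ii)$ nowhere uses $d\ge 3$, yet for the flat Laplacian in dimension $1$ estimate \eqref{b2} holds while \eqref{b3} fails for every $\alpha,\beta$ (Remark~\ref{rem1} $iv)$). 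So no dimension-free argument of this shape can be correct.

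The missing ingredient is the one the paper actually uses: extra spatial decay is converted into smallness only \emph{after} spectral localization at low energy. Concretely, with $z=\lambda^{-1}\widetilde{z}^{\,2}$ and a dyadic partition $\sum_{\mu}\varphi(\mu P)$, Lemma~\ref{Lem2} turns a weight $\langle x\rangle^{-2\gamma}$ composed with $\chi(\lambda P)$ into a factor $\lambda^{-\gamma+\delta}$, but only under the constraint $\gamma+\beta/2\le d/4$ --- this is precisely where $d\ge 3$ enters (via $\widetilde{\alpha}+\widetilde{\beta}>2$ with $\widetilde{\alpha},\widetilde{\beta}\le d/2$). On the diagonal shells $\mu\sim\lambda$ this gain $\lambda^{1/2+\varepsilon-(\widetilde{\alpha}+\widetilde{\beta})/2}$ beats the cost $\lambda^{1/2}$ of the localized resolvent estimate \eqref{a2}, and the off-diagonal shells are summed by plain functional calculus. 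Your factorization through $(\sqrt{P}\pm z)^{-1}$ does appear in the paper, but only inside this localized, rescaled framework where $((\lambda P)^{1/2}+z)^{-1}\Psi(\lambda P)$ is genuinely $O(|z|^{-1})$. As a minor further point, your interpolation endgame should use only the two $O(1)$ corners $(1/2+\varepsilon,3/2+\varepsilon)$ and $(3/2+\varepsilon,1/2+\varepsilon)$ together with monotonicity in the weights; interpolating against the $|z|^{-1/2}$ corner yields $O(|z|^{-\theta/2})$, not $O(1)$, on the interior of the segment. But the real work is in establishing those corners, and that is what is missing.
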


\begin{remark}\sl \label{rem1}
$i)$ The estimate \eqref{b3} is not far from optimal. Indeed, this estimate is false for the Euclidean Laplacian $-\Delta$ in dimension $3$ if $\alpha\le 1/2$ or $\beta\le 1/2$ or $\alpha+\beta<2$.

$ii)$ One can interpret \eqref{b3} in the following way: one needs a $\< x \>^{-1/2}$ on the left and on the right to assure that the resolvent is continuous on $L^{2} (\R^{d} )$ and one needs an additional $\< x \>^{-1}$ (distributed, as we want, among the left and the right) to guarantee that its norm is uniform with respect to $z$.

$iii)$ By interpolation of \eqref{b2} and \eqref{b3}, for $\alpha,\beta > 1 /2$ with $\alpha+\beta \leq 2$, one obtains estimates like \eqref{b3} with $\vert z \vert^{- 1 + \frac{\alpha + \beta}{2} - \varepsilon}$ on the right hand side.

$iv)$ In dimension $1$, the kernel of $( - \Delta - z )^{-1}$ is given by $\frac{i e^{i \sqrt{z} \vert x - y \vert}}{2 \sqrt{z}}$. In particular, this operator satisfies \eqref{b2} but not \eqref{b3} (for any $\alpha , \beta$). Therefore it seems that \eqref{b2} is more general than \eqref{b3}. It could perhaps be possible to prove \eqref{b2} in lower dimensions (at least, in dimension $2$ and when $P$ is of divergence form $P = - \dive (G \nabla )$).

$v)$ For large $z$, the estimate \eqref{b2} coincides with the high energy estimate in the non-trapping case. In particular, if we suppose in addition a non trapping condition for $P$, then \eqref{b1} and \eqref{b2} hold uniformly in $z \in \C \setminus \R$.
\end{remark}

%Similar estimates have recently be studied by Bouclet. We refer to \cite{Bo} for his results and a quiet exhaustive list of previous results for perturbations of the Euclidean Laplacian.  
%Guillarmou and Hassell have studied the low energy asymptotics of Schr\"odinger operators on asymptotically conical manifolds \cite{GH1}, \cite{GH2} by describing accurately the kernel of the Green function at low energies. 

The proof of the above theorem is based on the low frequency estimates of \cite{BoHa}. Concerning the square root of $P$, they are used to treat the wave equation. Note that in \cite{BoHa} they are formulated for the Laplace--Beltrami operator $- \Delta_{\mathfrak{g}}$, but they obviously hold for the operators studied in the present paper. Essentially, we will show that \eqref{b1}$\Rightarrow$\eqref{b2}$\Rightarrow$\eqref{b3}.

% In particular, it is not clear how to obtain $i)$ from $iii)$.

% The estimates on the square root of $P$ are useful when one treats with the wave equation.

% Nevertheless, for any $\alpha , \beta$, \eqref{b3} does not hold.

\section{Proof of the results}

We begin by recalling some results of \cite{BoHa}. For $\lambda \geq 1$, we set
\begin{equation*}
{\mathcal A}_{\lambda} = \varphi ( \lambda P ) A_{0} \varphi ( \lambda P) ,
\end{equation*}
where
\begin{equation*}
A_{0}= \frac{1}{2} ( x D + D x ) , \quad D ( A_0 ) = \big\{ u \in L^{2} ( \R^{d} ) ; \ A_0 u \in L^{2} ( \R^{d} ) \big\} ,
\end{equation*}
is the generator of dilations and $\varphi \in C^{\infty}_{0} ( ] 0 , + \infty [ ; [0, + \infty [)$ satisfies $\varphi (x) > 1$ on some open bounded interval $I=[1-\widetilde{\varepsilon},1+\widetilde{\varepsilon}],\, 0<\widetilde{\varepsilon}<1$ sufficiently small. As usual, we define the multi-commutators $\ad_{A}^{j} B$ inductively by $\ad_{A}^{0} B =B$ and $\ad_{A}^{j+1} B = [ A , \ad_{A}^{j} B ]$.
We recall \cite[Proposition 3.1]{BoHa}:

\begin{proposition}\sl  \label{PM1}
$i)$ We have $(\lambda P)^{1/2}\in C^2({\mathcal A}_{\lambda})$. The commutators $\ad^j_{{\mathcal A}_{\lambda}}(\lambda P)^{1/2}$, $j=1,2$, can be extended to bounded operators and we have, uniformly in $\lambda \geq 1$,
\begin{align*}
\big\Vert \big[ {\mathcal A}_{\lambda} , ( \lambda P)^{1/2} \big] \big\Vert & \lesssim 1 ,  \\
\big\Vert \ad^2_{{\mathcal A}_{\lambda}} ( \lambda P)^{1/2} \big\Vert & \lesssim
\left\{\begin{aligned}
&1 && \rho > 1 , \\
&\lambda^{\delta} && \rho \leq 1 ,
\end{aligned} \right.
\end{align*}
where $\delta >0$ can be chosen arbitrary small. 

$ii)$ For $\lambda$ large enough, we have the following Mourre estimate:
\begin{equation*}
\one_{I} ( \lambda P) \big[ i ( \lambda P)^{1/2} , {\mathcal A}_{\lambda} \big] \one_{I} ( \lambda P) \geq \frac{\sqrt{\inf I}}{2} \one_{I} ( \lambda P ) .
\end{equation*}

$iii)$ For $0 \leq \mu \leq 1$ and $\psi \in C^{\infty}_{0} ( ] 0 , + \infty [ )$, we have
\begin{equation*}
\big\Vert \< {\mathcal A}_{\lambda} \>^{\mu} \psi ( \lambda P) \< x \>^{- \mu} \big\Vert \lesssim \lambda^{- \mu /2 + \delta} ,
\end{equation*}
for all $\delta >0$.
\end{proposition}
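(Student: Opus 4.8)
The engine of the proof is a parabolic rescaling that turns the low--energy window $\lambda P\sim 1$ into a fixed--energy problem for a \emph{small} perturbation of $-\Delta$. Let $U_\lambda$ be the unitary dilation $(U_\lambda u)(y)=\lambda^{-d/4}u(y/\sqrt\lambda)$, which is nothing but $e^{-\frac i2(\log\lambda)A_0}$. A direct computation (the first--order terms coming from $\nabla G$ cancel those produced by the Jacobian) gives
\begin{equation*}
U_\lambda^\ast(\lambda P)U_\lambda=P_\lambda:=-b_\lambda\dive(G_\lambda\nabla b_\lambda),\qquad G_\lambda(y)=G(\sqrt\lambda\,y),\ b_\lambda(y)=b(\sqrt\lambda\,y).
\end{equation*}
By \eqref{a3}--\eqref{a4} the family $P_\lambda$ is a uniformly elliptic perturbation of $-\Delta$, and, because $(y\cdot\nabla_y)^kG_\lambda=\big[(z\cdot\nabla_z)^kG\big](\sqrt\lambda\,y)$, \emph{every} dilation--derivative of the coefficients of $P_\lambda+\Delta$ is bounded uniformly in $\lambda$ and is $O(\langle\sqrt\lambda\,y\rangle^{-\rho})$; in particular it carries a factor $\lambda^{-\rho/2}$ away from the origin. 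Since $U_\lambda$ commutes with $A_0$ and functional calculus commutes with conjugation, one has $U_\lambda^\ast(\lambda P)^{1/2}U_\lambda=P_\lambda^{1/2}$ and $U_\lambda^\ast\mathcal{A}_\lambda U_\lambda=\widetilde{\mathcal A}_\lambda:=\varphi(P_\lambda)A_0\varphi(P_\lambda)$. Every quantity in the proposition is therefore unitarily conjugate to its fixed--energy analogue for $P_\lambda$, and all that remains is to prove the corresponding bounds \emph{uniformly in the family} $\{P_\lambda\}_{\lambda\ge1}$.

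For the flat operator $A_0$ is the exact conjugate operator: the virial identity reads $i[-\Delta,A_0]=2(-\Delta)$, hence $[i\,|D|,A_0]=|D|$ and $\one_I(-\Delta)|D|\one_I(-\Delta)\ge\sqrt{\inf I}\,\one_I(-\Delta)$. For $P_\lambda$ I would write $[A_0,P_\lambda]=2iP_\lambda+R_\lambda$, where $R_\lambda$ is a second--order operator whose coefficients are the small quantities $\big[(z\cdot\nabla)(G,b)\big](\sqrt\lambda\,y)$, and pass to the square root through the Helffer--Sj\"ostrand formula
\begin{equation*}
[A_0,f(P_\lambda)]=\frac1\pi\int_\C\overline\partial\widetilde f(z)\,(P_\lambda-z)^{-1}[A_0,P_\lambda](P_\lambda-z)^{-1}\,L(dz),
\end{equation*}
applied to an $f\in C_0^\infty(]0,\infty[)$ equal to $\sqrt t$ on a neighbourhood of $\supp\varphi$ (so that the singularity of $\sqrt{\cdot}$ at $0$ is never seen). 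Because $\sqrt{\cdot}$ commutes with $\varphi(P_\lambda)$, the first commutator collapses to $\varphi(P_\lambda)[A_0,P_\lambda^{1/2}]\varphi(P_\lambda)$, which is $O(1)$; the leading piece $2iP_\lambda$ reproduces the flat Mourre bound $\one_I(P_\lambda)P_\lambda^{1/2}\one_I(P_\lambda)\ge\sqrt{\inf I}\,\one_I$, while the contribution of $R_\lambda$ is shown to tend to $0$ in norm as $\lambda\to\infty$. This yields part $(i)$ (first commutator) and part $(ii)$: for $\lambda$ large the small remainder is absorbed into the gap between $\sqrt{\inf I}$ and $\tfrac12\sqrt{\inf I}$.

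I would prove the weighted bound $(iii)$ next, both because it is needed above and because it isolates the role of the dimension. By complex interpolation it suffices to treat $\mu=0$, where it is the trivial $\Vert\psi(\lambda P)\Vert\lesssim1$, and $\mu=1$. For the latter, in the rescaled variables $\langle x\rangle^{-1}$ becomes $\langle\sqrt\lambda\,y\rangle^{-1}$, and the point is the two elementary facts $|y|\,\langle\sqrt\lambda\,y\rangle^{-1}\le\lambda^{-1/2}$ and, via Hardy's inequality (here $d\ge3$ is used),
\begin{equation*}
\big\Vert\,|y|^{-1}\psi(P_\lambda)\big\Vert\lesssim\big\Vert\nabla\psi(P_\lambda)\big\Vert\lesssim\big\Vert P_\lambda^{1/2}\psi(P_\lambda)\big\Vert\lesssim1 .
\end{equation*}
Together they give $\Vert\psi(P_\lambda)\langle\sqrt\lambda\,y\rangle^{-1}\Vert\lesssim\lambda^{-1/2}$, i.e.\ the ``$1$''--part of $\langle\widetilde{\mathcal A}_\lambda\rangle$; the $A_0$--part is handled the same way once one notes that the symbol $y\cdot\eta$ of $A_0$ vanishes precisely where $\langle\sqrt\lambda\,y\rangle^{-1}$ fails to be small, a pseudodifferential computation whose only cost is the symbolic remainder $\lambda^{+\delta}$. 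The delocalization expressed by this estimate is exactly what forces the relevant spatial scale to be $|y|\sim1$ (i.e.\ $|x|\sim\sqrt\lambda$), and it is what makes the $R_\lambda$--correction in $(ii)$ small.

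The genuine difficulty is the second commutator in $(i)$. Iterating the Helffer--Sj\"ostrand formula and commuting $A_0$ through both the resolvents and the energy cut--offs $\varphi(P_\lambda)$ (whose commutators $[A_0,\varphi(P_\lambda)]$ are themselves produced by the same formula) writes $\ad^2_{\widetilde{\mathcal A}_\lambda}P_\lambda^{1/2}$ as a finite sum of terms carrying the coefficients $\big[(z\cdot\nabla)^{\leq 2}(G,b)\big](\sqrt\lambda\,y)$ sandwiched between resolvents. When $\rho>1$ these coefficients are integrable along the dilation flow and each term is $O(1)$. When $\rho\le1$ the decay is borderline and the naive estimate diverges; one must then insert $A_0$--weights and trade them for $\langle x\rangle$--weights by means of $(iii)$, each such trade costing a factor $\lambda^{+\delta}$ --- this is the origin of the stated $\lambda^{\delta}$. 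Controlling this bookkeeping, and in particular checking that a single application of $(iii)$ suffices so that the loss does not accumulate beyond $\lambda^\delta$, is the step I expect to be the main obstacle; the rest is the flat virial computation together with uniform elliptic estimates for the family $\{P_\lambda\}$.
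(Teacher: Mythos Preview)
The paper you are working from does not actually prove this proposition: it is stated verbatim as a recall of \cite[Proposition~3.1]{BoHa}, and no argument is given here. So there is nothing in the present paper to compare your attempt against; the proof lives entirely in the companion paper \cite{BoHa}.

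That said, your strategy is the right one and is precisely the engine of \cite{BoHa}: conjugate by the dilation $U_\lambda=e^{-\frac{i}{2}(\log\lambda)A_0}$ so that $\lambda P$ becomes the fixed--energy operator $P_\lambda=-b_\lambda\dive(G_\lambda\nabla b_\lambda)$ with $G_\lambda(y)=G(\sqrt\lambda\,y)$, $b_\lambda(y)=b(\sqrt\lambda\,y)$; observe that $A_0$ commutes with $U_\lambda$ so that $\mathcal A_\lambda$ becomes $\varphi(P_\lambda)A_0\varphi(P_\lambda)$; and then run the standard Mourre machinery for the \emph{uniform} family $\{P_\lambda\}$, which is a small perturbation of $-\Delta$ as $\lambda\to\infty$. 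Your treatment of part $(ii)$ via $[A_0,P_\lambda]=2iP_\lambda+R_\lambda$ with $R_\lambda\to0$, and of the ``$1$''--piece of part $(iii)$ via the factorisation $\langle\sqrt\lambda\,y\rangle^{-1}=\lambda^{-1/2}|y|^{-1}\cdot\big(\sqrt\lambda\,|y|\,\langle\sqrt\lambda\,y\rangle^{-1}\big)$ together with Hardy's inequality (this is where $d\ge3$ enters), are both correct and match the reference.

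Where your sketch stops short of a proof is exactly where you say it does: the second commutator in part $(i)$. Iterating Helffer--Sj\"ostrand and commuting $A_0$ through resolvents and through the cut--offs $\varphi(P_\lambda)$ does produce a finite sum of controllable pieces, but your explanation of the $\lambda^{\delta}$ loss when $\rho\le1$ (``insert $A_0$--weights and trade them for $\langle x\rangle$--weights via $(iii)$'') is not quite the mechanism. The loss in \cite{BoHa} comes rather from the fact that the remainder coefficients carry $\langle\sqrt\lambda\,y\rangle^{-\rho}$, which is \emph{not} uniformly $O(\lambda^{-\rho/2})$ near $y=0$; one must split into a region $|y|\ge\lambda^{-\nu}$ (where decay gives $\lambda^{-\rho(1/2-\nu)}$) and a small ball $|y|<\lambda^{-\nu}$ (handled by Hardy/Sobolev with a volume factor), and optimising in $\nu$ leaves the arbitrarily small power $\lambda^{\delta}$. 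This is genuine bookkeeping, not a new idea, but it is the step you have left as an assertion rather than an argument.
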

We will also need \cite[Lemma~B.12]{BoHa}:

\begin{lemma}\sl \label{Lem2}
Let $\chi \in C^{\infty}_{0} ( \R )$ and $\beta , \gamma \geq 0$ with $\gamma + \beta /2 \leq d/4$. Then, for all $\delta > 0$, we have
\begin{gather*}
\big\Vert \< x \>^{\beta} \chi ( \lambda P ) u \big\Vert \lesssim \lambda^{- \gamma + \delta} \big\Vert \< x \>^{\beta + 2 \gamma} u \big\Vert
\end{gather*}
uniformly in $\lambda\ge 1$.
\end{lemma}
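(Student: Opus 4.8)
The plan is to reduce the whole statement to a single off-diagonal kernel bound for the spectral cutoff and then extract the weighted inequality by soft, dimension-driven arguments. Write $K_{\lambda}(x,y)$ for the Schwartz kernel of $\chi(\lambda P)$. The one fact I would isolate about $P$ is the estimate
\begin{equation*}
\vert K_{\lambda}(x,y) \vert \lesssim_{N} \lambda^{-d/2} \<(x-y)/\sqrt{\lambda}\>^{-N}, \qquad \forall N \in \N, \tag{KB}
\end{equation*}
uniformly in $\lambda \geq 1$. For the flat Laplacian this is immediate: the change of variables $\xi \mapsto \xi / \sqrt{\lambda}$ in the Fourier multiplier shows that $\chi(\lambda(-\Delta))$ is convolution by $\lambda^{-d/2} \widetilde{\chi}(\cdot/\sqrt{\lambda})$ with $\widetilde{\chi}$ Schwartz. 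For $P$ I would obtain (KB) by the semiclassical rescaling $h = \lambda^{-1/2}$: conjugating $\lambda P$ by the unitary dilation $u \mapsto h^{-d/2} u(\cdot/h)$ turns it into a semiclassical elliptic operator whose coefficients, by \eqref{a3} and \eqref{a4}, lie in good symbol classes uniformly in $h$ (since $G(\cdot/h), b(\cdot/h) \to I_{d}, 1$). The Helffer--Sjöstrand formula then realizes $\chi$ of this operator as a semiclassical pseudo-differential operator with symbol compactly supported in $\xi$, whose kernel decays rapidly off the diagonal on scale $h$; undoing the dilation yields (KB).

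Granting (KB), I would first treat the case with no weight on the left, namely for $0 \leq s \leq d/2$,
\begin{equation*}
\big\Vert \chi(\lambda P) \<x\>^{-s} \big\Vert \lesssim \lambda^{-s/2 + \delta} . \tag{$\star$}
\end{equation*}
Since (KB) dominates $K_{\lambda}$ pointwise by a convolution kernel of $L^{a}$-norm $\lesssim \lambda^{-\frac{d}{2}(1 - 1/a)}$, Young's inequality gives $\Vert \chi(\lambda P) \Vert_{L^{p} \to L^{2}} \lesssim \lambda^{-\frac{d}{2}(\frac{1}{p} - \frac{1}{2})}$ for $1 \leq p \leq 2$. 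Writing $g = \<x\>^{-s} f$ and applying Hölder with $\frac{1}{p} = \frac{1}{r} + \frac{1}{2}$ gives $\Vert g \Vert_{L^{p}} \leq \Vert \<x\>^{-s} \Vert_{L^{r}} \Vert f \Vert_{L^{2}}$, which is finite as soon as $s r > d$. Choosing $r = d/s + \varepsilon'$ (legitimate precisely because $s \leq d/2$ forces $r \geq 2$, hence $p \geq 1$) produces the exponent $-\frac{d}{2r} = -\frac{s}{2} + \delta$, and $(\star)$ follows; the loss $\delta$ and the threshold $s = d/2$ both come from the borderline failure of $\<x\>^{-d/2}$ to lie in $L^{2}$.

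It remains to restore the growing weight $\<x\>^{\beta}$, and this is where the hypothesis $\gamma + \beta/2 \leq d/4$ enters. On the effective support of $K_{\lambda}$ one has $\<x\>^{\beta} \lesssim \<y\>^{\beta} + \<x-y\>^{\beta}$, and since $\<x-y\> \leq \sqrt{\lambda}\, \<(x-y)/\sqrt{\lambda}\>$ for $\lambda \geq 1$, the bound (KB) absorbs the second factor at the cost of $\lambda^{\beta/2}$ and of $N - \beta$ in place of $N$. Hence the kernel of $\<x\>^{\beta} \chi(\lambda P) \<x\>^{-\beta-2\gamma}$ is pointwise controlled by those of $\chi(\lambda P) \<x\>^{-2\gamma}$ and of $\lambda^{\beta/2} \chi(\lambda P) \<x\>^{-\beta-2\gamma}$. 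Applying $(\star)$ with $s = 2\gamma \leq d/2$ and with $s = \beta + 2\gamma \leq d/2$ (both admissible exactly by $\gamma + \beta/2 \leq d/4$) gives $\lambda^{-\gamma + \delta} + \lambda^{\beta/2} \lambda^{-(\gamma + \beta/2) + \delta} \lesssim \lambda^{-\gamma + \delta}$, which is the assertion.

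The main obstacle is (KB) for the perturbed operator. The free case is trivial, but controlling $\chi(\lambda P)$ uniformly down to zero energy demands the semiclassical parametrix construction after rescaling, and this is exactly the place where the long range structure \eqref{a4} and the ellipticity \eqref{a3} are genuinely used; in particular one must check that the rescaled symbol estimates hold uniformly as $h \to 0$. Once (KB) is established, both step $(\star)$ and the weight-splitting are elementary and purely dimension-driven.
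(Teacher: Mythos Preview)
The paper itself does not prove this lemma: it is imported verbatim as \cite[Lemma~B.12]{BoHa}. From the remark following the proof of Lemma~\ref{Lem1} (``as an almost analytic extension, we can just take\ldots''), the argument in \cite{BoHa} evidently runs through the Helffer--Sj\"ostrand formula and weighted resolvent/commutator estimates at the operator level, not through pointwise kernel bounds. Your route is therefore genuinely different, and more elementary once the kernel bound is available.

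Granted (KB), your argument is correct: the $L^{p}\to L^{2}$ bound via Young, the H\"older step giving $(\star)$ with the threshold $s\le d/2$, and the Peetre-type splitting $\<x\>^{\beta}\lesssim\<y\>^{\beta}+\<x-y\>^{\beta}$ together with $\<x-y\>\le\sqrt{\lambda}\,\<(x-y)/\sqrt{\lambda}\>$ all work exactly as you write, and the hypothesis $\gamma+\beta/2\le d/4$ enters precisely where you say.

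The gap is in the justification of (KB). Conjugating $\lambda P$ by the dilation does produce an operator with coefficients $b(\cdot/h)$, $G(\cdot/h)$ (with $h=\lambda^{-1/2}$), and these converge pointwise to $1$, $I_{d}$; but they do \emph{not} lie in good symbol classes uniformly in $h$. Indeed $\partial_{x}[G(x/h)]=h^{-1}(\partial G)(x/h)$, and \eqref{a4} only gives $\vert(\partial G)(y)\vert\lesssim\<y\>^{-1-\rho}$, so on $\vert x\vert\lesssim h$ one has $\vert\partial_{x}[G(x/h)]\vert\sim h^{-1}$, and more generally no uniform $C^{1}$ bound holds on any fixed neighbourhood of the origin. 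The semiclassical parametrix construction you invoke therefore does not apply uniformly as $h\to 0$. The estimate (KB) is nonetheless true, but one has to obtain it by a method insensitive to derivatives of the coefficients---for instance from the uniform Gaussian heat-kernel upper bounds for the rescaled operators (which depend only on the ellipticity constants from \eqref{a3}) combined with a representation of $\chi$ through the heat semigroup, or by Davies-type exponentially weighted $L^{2}$ arguments. This is exactly the ``main obstacle'' you flag at the end; only the mechanism you propose for extracting (KB) needs to be replaced.
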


By Mourre theory (see Theorem~2.2 and Remark~2.3 of \cite{BoHa} for example) and Proposition~\ref{PM1}, we obtain the following limiting absorption principle:
\begin{eqnarray}
\label{*}
\sup_{\re z\in I, \, \im z\neq 0} \big\Vert \<{\mathcal A}_{\lambda} \>^{-1/2-\varepsilon}((\lambda P)^{1/2}-z)^{-1}\<{\mathcal A}_{\lambda} \>^{-1/2-\varepsilon} \big\Vert \lesssim \lambda^{\delta} ,
\end{eqnarray}
for all $\varepsilon , \delta > 0$. This entails the following

\begin{lemma}\sl \label{Lem1}
For $\Psi\in C_0^{\infty}(]0, + \infty[)$ and $\varepsilon >0$, we have
\begin{gather}
\big\Vert \< x \>^{-1/2 - \varepsilon} \Psi ( \lambda P)( \sqrt{P} -\lambda^{-1/2}z)^{-1} \<x\>^{-1/2-\varepsilon} \big\Vert \lesssim 1 ,  \label{a1} \\
\big\Vert \<x\>^{-1/2-\varepsilon}\Psi(\lambda P)(P-\lambda^{-1}z^2)^{-1}\<x\>^{-1/2-\varepsilon} \big\Vert \lesssim \frac{\sqrt{\lambda}}{\vert z\vert} ,  \label{a2}
\end{gather}
uniformly in $\lambda\ge 1$ and $z\in \C\setminus \R$ with $\re z \in I$.
\end{lemma}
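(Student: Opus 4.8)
The plan is to deduce Lemma \ref{Lem1} from the scaled limiting absorption principle \eqref{*} by converting the weights $\<\CA_\lambda\>^{-1/2-\varepsilon}$ into the desired position weights $\<x\>^{-1/2-\varepsilon}$, while absorbing the cut-off $\Psi(\lambda P)$ appropriately. First I would write $\Psi(\lambda P) = \Psi(\lambda P)\psi(\lambda P)$ where $\psi\in C_0^\infty(]0,+\infty[)$ is chosen to satisfy $\psi\equiv 1$ on a neighbourhood of $\supp\Psi$ and (crucially) so that the relevant spectral support lands inside the interval $I$ on which the Mourre estimate of Proposition \ref{PM1}$(ii)$ holds; strictly speaking one needs $\supp\Psi$ inside $I$, but since $\Psi$ is an arbitrary element of $C_0^\infty(]0,+\infty[)$ one first reduces to that case by a scaling in $\lambda$ (replacing $P$ by $c P$ moves the support) together with the uniformity in $\lambda$, or more simply by decomposing $\Psi$ via a partition of unity and rescaling each piece, noting that the constants in \eqref{*} are scale-invariant in the stated sense. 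Having localized, the resolvent $((\lambda P)^{1/2}-z)^{-1}$ commutes with $\psi(\lambda P)$, so
\[
\<x\>^{-1/2-\varepsilon}\Psi(\lambda P)((\lambda P)^{1/2}-z)^{-1}\<x\>^{-1/2-\varepsilon}
= B_\lambda \<\CA_\lambda\>^{-1/2-\varepsilon}((\lambda P)^{1/2}-z)^{-1}\<\CA_\lambda\>^{-1/2-\varepsilon} C_\lambda,
\]
where $B_\lambda = \<x\>^{-1/2-\varepsilon}\Psi(\lambda P)\<\CA_\lambda\>^{1/2+\varepsilon}$ and $C_\lambda = \<\CA_\lambda\>^{1/2+\varepsilon}\psi(\lambda P)\<x\>^{-1/2-\varepsilon}$, provided I insert $\psi(\lambda P)$ on both sides (absorbed into $\Psi$ on the left and kept explicitly on the right), so that the middle factor is exactly the object controlled by \eqref{*}.

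The key analytic input is then Proposition \ref{PM1}$(iii)$ with $\mu = 1/2+\varepsilon$: it gives $\Vert\<\CA_\lambda\>^{1/2+\varepsilon}\psi(\lambda P)\<x\>^{-1/2-\varepsilon}\Vert\lesssim\lambda^{-1/4-\varepsilon/2+\delta}$, hence $\Vert C_\lambda\Vert\lesssim\lambda^{-1/4-\varepsilon/2+\delta}$, and by taking adjoints (using that $\CA_\lambda$, $P$ are self-adjoint and $\Psi$, $\psi$ real-valued) also $\Vert B_\lambda\Vert\lesssim\lambda^{-1/4-\varepsilon/2+\delta}$. Combining with \eqref{*}, which contributes $\lambda^{\delta}$, the norm on the left of \eqref{a1} is $\lesssim\lambda^{-1/2-\varepsilon+3\delta}\lesssim 1$ once $\delta$ is chosen small relative to $\varepsilon$ (one can even afford $\varepsilon$ arbitrarily small since $-1/2$ alone beats the $\lambda^\delta$ losses). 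This proves \eqref{a1} after undoing the scaling: the operator $(\lambda P)^{1/2}-z = \sqrt\lambda(\sqrt P - \lambda^{-1/2}z)$, so $((\lambda P)^{1/2}-z)^{-1} = \lambda^{-1/2}(\sqrt P-\lambda^{-1/2}z)^{-1}$; the factor $\lambda^{-1/2}$ is harmless — wait, more carefully, the $\lambda^{-1/2}$ must be tracked: rewriting $\Vert\<x\>^{-1/2-\varepsilon}\Psi(\lambda P)(\sqrt P-\lambda^{-1/2}z)^{-1}\<x\>^{-1/2-\varepsilon}\Vert = \sqrt\lambda\,\Vert\<x\>^{-1/2-\varepsilon}\Psi(\lambda P)((\lambda P)^{1/2}-z)^{-1}\<x\>^{-1/2-\varepsilon}\Vert \lesssim \sqrt\lambda\cdot\lambda^{-1/2-\varepsilon+3\delta} = \lambda^{-\varepsilon+3\delta}\lesssim 1$.

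For \eqref{a2} I would use the resolvent identity $(P-\lambda^{-1}z^2)^{-1} = \lambda((\lambda P)-z^2)^{-1}$ and factor $(\lambda P) - z^2 = ((\lambda P)^{1/2}-z)((\lambda P)^{1/2}+z)$, so that
\[
(P-\lambda^{-1}z^2)^{-1} = \lambda\,((\lambda P)^{1/2}-z)^{-1}\,((\lambda P)^{1/2}+z)^{-1}.
\]
Since $\re z\in I\subset]0,+\infty[$ is bounded away from $0$ and $\im z\neq 0$, the factor $((\lambda P)^{1/2}+z)^{-1}$ is of the same type as the resolvent at $-z$ (note $\re(-z) = -\re z<0$ is at positive distance from the positive spectrum of $(\lambda P)^{1/2}$), but actually it is simpler: $-z$ lies at distance $\gtrsim 1$ from $[0,+\infty[$, so $((\lambda P)^{1/2}+z)^{-1}$ is bounded by $|z|^{-1}$ uniformly (indeed $\Vert((\lambda P)^{1/2}+z)^{-1}\Vert \le \dist(-z,[0,+\infty[)^{-1} \lesssim |z|^{-1}$ since $\re z\in I$ forces $\dist(-z,[0,+\infty[) \ge \re z \gtrsim 1$, giving the bound $\lesssim 1$, but to get the sharp $|z|^{-1}$ one uses $\dist(-z,\sigma((\lambda P)^{1/2})) \gtrsim |z|$ when $|z|$ is small). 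Inserting $\Psi(\lambda P) = \widetilde\Psi(\lambda P)\Psi(\lambda P)$ with $\widetilde\Psi\equiv 1$ near $\supp\Psi$, moving $\widetilde\Psi(\lambda P)$ through the second resolvent, and applying \eqref{a1} to the first resolvent pair gives
\[
\Vert\<x\>^{-1/2-\varepsilon}\Psi(\lambda P)(P-\lambda^{-1}z^2)^{-1}\<x\>^{-1/2-\varepsilon}\Vert \lesssim \lambda\cdot\lambda^{-1/2}\cdot\frac{1}{|z|}\cdot\Vert\widetilde\Psi(\lambda P)\<x\>^{1/2+\varepsilon}\<x\>^{-1/2-\varepsilon}\Vert \lesssim \frac{\sqrt\lambda}{|z|},
\]
where one slips $\<x\>^{-1/2-\varepsilon}\<x\>^{1/2+\varepsilon}$ around $\widetilde\Psi(\lambda P)$ to land back on the weighted resolvent of \eqref{a1}; the remaining $\<x\>^{\pm}$ bookkeeping and boundedness of $\Psi(\lambda P)$, $\widetilde\Psi(\lambda P)$ on $L^2$ is routine. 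The main obstacle — and the only real content — is the first paragraph: justifying the weight-exchange $B_\lambda, C_\lambda$ with the sharp power of $\lambda$ coming from Proposition \ref{PM1}$(iii)$, and arranging the spectral localization cleanly enough that the Mourre estimate on $I$ (Proposition \ref{PM1}$(ii)$) applies, uniformly in the scaling parameter $\lambda$; once that machinery is set up, everything else is algebra with resolvent identities and elementary distance-to-spectrum estimates.
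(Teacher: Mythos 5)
Your argument for \eqref{a1} is essentially the paper's: insert a second cutoff $\widetilde{\Psi}(\lambda P)$, use Proposition~\ref{PM1}~$iii)$ with $\mu=1/2+\varepsilon$ to trade $\<x\>^{-1/2-\varepsilon}$ for $\<{\mathcal A}_{\lambda}\>^{-1/2-\varepsilon}$ at the cost of $\lambda^{-1/4-\varepsilon/2+\delta}$ on each side, and conclude from \eqref{*}; the exponent bookkeeping $\sqrt{\lambda}\cdot\lambda^{-1/2-\varepsilon+3\delta}\lesssim 1$ is exactly right. One remark: the reduction to $\supp\Psi\subset I$ that you spend the first paragraph on is not needed. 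The interval $I$ in \eqref{*} constrains $\re z$, not the spectral support of $\Psi$; the cutoff is inserted only so that Proposition~\ref{PM1}~$iii)$ (which requires a compactly supported function of $\lambda P$ next to the weight) applies, and \eqref{*} holds as stated for the full resolvent with ${\mathcal A}_{\lambda}$-weights.

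For \eqref{a2} there is a genuine gap. After the factorization $(P-\lambda^{-1}z^{2})^{-1}=\lambda\,((\lambda P)^{1/2}-z)^{-1}((\lambda P)^{1/2}+z)^{-1}$ and the application of \eqref{a1} to the first factor, what remains to be bounded is not $\Vert((\lambda P)^{1/2}+z)^{-1}\Vert$ but the \emph{weighted} quantity
\begin{equation*}
\big\Vert \<x\>^{1/2+\varepsilon'}\,\widetilde{\Psi}(\lambda P)\,((\lambda P)^{1/2}+z)^{-1}\,\<x\>^{-1/2-\varepsilon}\big\Vert ,
\end{equation*}
with a \emph{growing} weight on one side (it must cancel the decaying weight you inserted to invoke \eqref{a1}). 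The spectral-theorem bound $\dist(-z,[0,+\infty[)^{-1}=|z|^{-1}$ is an unweighted $L^{2}$ estimate and does not survive conjugation by $\<x\>^{\pm(1/2+\varepsilon)}$, since the resolvent does not commute with $\<x\>$; the factor you actually write down, $\Vert\widetilde{\Psi}(\lambda P)\<x\>^{1/2+\varepsilon}\<x\>^{-1/2-\varepsilon}\Vert$, is not the operator that occurs. This weight exchange is the only nontrivial content of \eqref{a2}, and the paper handles it by treating $\widetilde{\Psi}(\lambda P)((\lambda P)^{1/2}+z)^{-1}$ as a \emph{single} function of $\lambda P$ and rerunning the proof of Lemma~\ref{Lem2} with the almost analytic extension of $\widetilde{\Psi}$ multiplied by the analytic function $(\sqrt{x}+z)^{-1}$; since $|\sqrt{x}+z|\gtrsim |z|$ on $\supp\widetilde{\Psi}$ when $\re z\in I$, every seminorm of this symbol is $O(|z|^{-1})$, which is where the gain $|z|^{-1}$ comes from. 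A second, smaller point: the paper takes the intermediate weights to be $\<x\>^{\pm(1/2+\varepsilon/2)}$ rather than $\<x\>^{\pm(1/2+\varepsilon)}$, so that Lemma~\ref{Lem2} is applied with $\gamma=\varepsilon/4>0$ and the resulting $\lambda^{-\gamma+\delta}$ is $\lesssim 1$; with your choice ($\gamma=0$) you pick up an extra $\lambda^{\delta}$ and get $\lambda^{1/2+\delta}/|z|$, which is not quite \eqref{a2} as stated.
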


\begin{proof}
Let $\widetilde{\Psi}\in C_0^{\infty}(]0, + \infty[)$ be such that $\Psi\widetilde{\Psi}=\Psi$.

To prove the first identity, we write
\begin{align*}
\big\Vert\<x \>^{-1/2-\varepsilon} & \Psi(\lambda P)(P^{1/2}-\lambda^{-1/2}z)^{-1}\<x\>^{-1/2-\varepsilon} \big\Vert  \\
\lesssim& \big\Vert\<x\>^{-1/2-\varepsilon}\Psi(\lambda P)\<{\mathcal A}_{\lambda} \>^{1/2+\varepsilon} \big\Vert \big\Vert \<{\mathcal A}_{\lambda}\>^{-1/2-\varepsilon}(P^{1/2}-\lambda^{-1/2}z)^{-1}\<{\mathcal A}_{\lambda}\>^{-1/2-\varepsilon} \big\Vert   \\
&\times \big\Vert \<{\mathcal A}_{\lambda} \>^{1/2+\varepsilon}\widetilde{\Psi}(\lambda P)\<x\>^{-1/2-\varepsilon} \big\Vert   \\
\lesssim& \lambda^{- \frac{1}{4} - \frac{\varepsilon}{2} + \delta} \lambda^{\frac{1}{2} + \delta} \lambda^{- \frac{1}{4} - \frac{\varepsilon}{2} + \delta} \lesssim 1.
\end{align*}
Here we have used Proposition~\ref{PM1} $iii)$, Lemma~\ref{Lem2} as well as the fact that $\delta$ can be chosen arbitrary small.

To obtain \eqref{a2}, it is sufficient to write
\begin{align*}
\big\Vert \<x\>^{-1/2-\varepsilon} & \Psi(\lambda P)(P-\lambda^{-1}z^2)^{-1}\<x\>^{-1/2-\varepsilon}\big\Vert  \\
\lesssim& \lambda^{1/2}\big\Vert\<x\>^{-1/2-\varepsilon}\Psi(\lambda P)((\lambda P)^{1/2}+z)^{-1}\<x\>^{1/2+\varepsilon/2}\big\Vert\\
&\times \big\Vert\<x\>^{-1/2-\varepsilon/2}\widetilde{\Psi}(\lambda P)(P^{1/2}-\lambda^{-1/2}z)^{-1}\<x\>^{-1/2-\varepsilon}\big\Vert\\
\lesssim& \frac{\lambda^{1/2}}{\vert z\vert} .
\end{align*}
Here we have used \eqref{a1} and Lemma \ref{Lem2}. It is clear from the proof of Lemma \ref{Lem2} in \cite{BoHa} that we can apply it to $\Psi(\lambda P)((\lambda P)^{1/2}+z)^{-1}$ and that we gain $\frac{1}{\vert z\vert}$. Indeed, as an almost analytic extension, we can just take the almost analytic extension of $\Psi$ multiplied by the analytic function $\frac{1}{\sqrt{x}+z}$.  
\end{proof}

\begin{proof}[Proof of Theorem \ref{th1}]
We only show the third part of the theorem, the proof of the other parts is analogous. Also it is clearly sufficient to replace $z$ by $\lambda^{-1} \widetilde{z}^2$ with $\re \widetilde{z} =1 \in I$ and $\lambda \geq 1$ (for instance, $\lambda = ( \re \sqrt{z} )^{-2}$ and $\widetilde{z} = \sqrt{z}/ (\re \sqrt{z} )$). Let $\varphi , \widetilde{\varphi} \in C_0^{\infty} ([\frac{1}{3}, 3])$ and $f \in C^{\infty} ( \R )$ be such that $\widetilde{\varphi} =1$ on the support of $\varphi$, $f (x) =0$ for $x < 2$ and
\begin{equation*}
f(x) + \sum_{\mu=2^n,\, n\ge 0}\varphi(\mu x)=1 ,
\end{equation*}
for all $x >0$. Since $0$ is not an eigenvalue of $P$, we can write
\begin{equation*}
\<x\>^{-\alpha}(P- z)^{-1}\<x\>^{-\beta} = \<x\>^{-\alpha} f (P) (P-z)^{-1} \<x\>^{-\beta} + \sum_{\mu=2^n,\, n\ge 0}\<x\>^{-\alpha}\varphi(\mu P)(P-\lambda^{-1}\widetilde{z}^2)^{-1}\<x\>^{-\beta}.
\end{equation*}
Of course, since $\vert z \vert <1$, the functional calculus gives
\begin{equation*}
\big\Vert \<x\>^{-\alpha} f (P) (P-z)^{-1} \<x\>^{-\beta} \big\Vert \lesssim 1.
\end{equation*}
Let $\widetilde{\alpha} = \min ( \alpha,\frac{d}{2} )$ and $\widetilde{\beta}= \min ( \beta,\frac{d}{2} )$. Note that $\widetilde{\alpha}+\widetilde{\beta}>2$ since $d\geq 3$. Let $\Psi \in C^{\infty}_{0} ( ] 0 , + \infty [)$ be such that $\Psi =1$ near $[\frac{1}{12} , 12 ]$. Then, for $\frac{\mu}{4}\le \lambda\le 4\mu$, we have
\begin{align*}
\big\Vert & \< x \>^{-\alpha} \varphi ( \mu P) (P-\lambda^{-1} \widetilde{z}^2 )^{-1} \< x \>^{-\beta} \big\Vert  \\
&\lesssim \big\Vert \<x\>^{-\alpha} \varphi ( \mu P) \< x \>^{1/2 + \varepsilon} \big\Vert \big\Vert \<x\>^{-1/2-\varepsilon} \Psi ( \lambda P) (P-\lambda^{-1} \widetilde{z}^2 )^{-1} \< x \>^{-1/2-\varepsilon} \big\Vert \big\Vert \< x \>^{1/2 + \varepsilon} \widetilde{\varphi} ( \mu P) \<x\>^{-\beta} \big\Vert  \\
&\lesssim \lambda^{\frac{1}{4} + \frac{\varepsilon}{2} - \frac{\widetilde{\alpha}}{2} + \delta} \lambda^{\frac{1}{2}} \vert \widetilde{z} \vert^{-1} \lambda^{\frac{1}{4} + \frac{\varepsilon}{2} - \frac{\widetilde{\beta}}{2} + \delta} \lesssim \lambda^{1 + \varepsilon + 2 \delta - \frac{\widetilde{\alpha} + \widetilde{\beta}}{2}} \lesssim 1 ,
\end{align*}
for all $\varepsilon , \delta >0$ small enough. Here we have used \eqref{a2} and two times Lemma~\ref{Lem2}. On the other hand, for $\lambda\notin [\frac{\mu}{4}, 4\mu]$, the functional calculus and Lemma~\ref{Lem2} yield
\begin{align*}
\big\Vert \<x\>^{-\alpha} \varphi(\mu P)(P-\lambda^{-1}\widetilde{z}^2)^{-1}\<x\>^{-\beta}\big\Vert & \lesssim \vert\mu^{-1}-\lambda^{-1}\vert^{-1}\big\Vert\<x\>^{-\alpha}\varphi(\mu P)\big\Vert\big\Vert \widetilde{\varphi}(\mu P) \<x\>^{-\beta} \big\Vert\\
&\lesssim \vert\mu^{-1}-\lambda^{-1}\vert^{-1}\mu^{-\frac{\widetilde{\alpha}+\widetilde{\beta}}{2} + \varepsilon} ,
\end{align*}
for all $\varepsilon >0$. Splitting the sum into two, we get
\begin{gather*}
\sum_{4 \mu < \lambda} \vert \mu^{-1}-\lambda^{-1} \vert^{-1} \mu^{-\frac{\widetilde{\alpha}+\widetilde{\beta}}{2} + \varepsilon} \lesssim \sum_{4 \mu<\lambda} \mu \mu^{-\frac{\widetilde{\alpha}+\widetilde{\beta}}{2} + \varepsilon} \lesssim 1,\\
\sum_{\mu > 4 \lambda}\vert\mu^{-1}-\lambda^{-1}\vert^{-1}\mu^{-\frac{\widetilde{\alpha}+\widetilde{\beta}}{2}+ \varepsilon} \lesssim \sum_{\mu > 4 \lambda}\lambda\mu^{-\frac{\widetilde{\alpha}+\widetilde{\beta}}{2} + \varepsilon}\lesssim 1.
\end{gather*}
This finishes the proof of the theorem.
\end{proof}

\begin{proof}[Proof of Remark~\ref{rem1} $i)$]
Let us recall that the kernel of the resolvent of the flat Laplacian in $\R^{3}$ at $z=0$ is given by
\begin{equation*}
K(x,y,0)=\frac{1}{4\pi \vert x-y\vert}.
\end{equation*}
Assume that $\<x\>^{-\alpha}(-\Delta)^{-1}\<x\>^{-\beta}$ is bounded on $L^{2} (\R^{3})$. Applying to $\chi\in C_0^{\infty}(\R^3)\subset L^2(\R^3)$, we find
\begin{equation*}
\big( \<x\>^{-\alpha}(-\Delta)^{-1}\<x\>^{-\beta}\chi \big) (x)=\int\frac{1}{4\pi \vert x-y\vert}\<x\>^{-\alpha}\<y\>^{-\beta}\chi(y)dy\gtrsim \<x\>^{-\alpha-1} ,
\end{equation*}
for $\vert x \vert \gg 1$. But $\<x\>^{-1-\alpha}\in L^2 (\R^{3} )$ if and only if $\alpha>1/2$. The condition $\beta> 1/2$ is checked in the same way.
We now apply the resolvent to $f(x)=\<x\>^{-3/2-\varepsilon}\in L^2(\R^3)$ and find
\begin{align*}
\big( \<x\>^{-\alpha}(-\Delta)^{-1}\<x\>^{-\beta}f \big)(x) & = \int\frac{1}{4\pi \vert x-y\vert}\<x\>^{-\alpha}\<y\>^{-\beta}
\<y\>^{-3/2-\varepsilon}dy\\
&\geq \int_{\vert y\vert\le \frac{\vert x\vert}{2}}\frac{1}{4\pi \vert x-y\vert}\<x\>^{-\alpha}\<y\>^{-\beta}\<y\>^{-3/2-\varepsilon}dy\\
&\gtrsim \<x\>^{-\alpha-1}\int_{\vert y\vert\le \frac{\vert x\vert}{2}}\<y\>^{-3/2-\varepsilon-\beta}dy\gtrsim\<x\>^{3/2-\alpha-\beta-1-\varepsilon}.
\end{align*}
This leads to the condition $2 ( 3/2 - \alpha - \beta- 1 ) \leq -3$ which implies $\alpha + \beta \geq 2$.
\end{proof}

\end{document}